\newtheorem{thm}{Theorem}
\newtheorem{prop}[thm]{Proposition}
\newtheorem{lem}[thm]{Lemma}
\theoremstyle{remark}
\newtheorem{rem}[thm]{Remark}
\newcommand{\FF}{\mathbb{F}}
\newcommand{\ZZ}{\mathbb{Z}}
\newcommand{\1}{\mathbf{1}}
\DeclareMathOperator{\wt}{wt}
\DeclareMathOperator{\rank}{rank}
\begin{document}
\title{Hadamard matrices related to
a certain series of ternary self-dual codes
}

\author{
Makoto Araya\thanks{Department of Computer Science,
Shizuoka University,
Hamamatsu 432--8011, Japan.
email: {\tt araya@inf.shizuoka.ac.jp}},
Masaaki Harada\thanks{
Research Center for Pure and Applied Mathematics,
Graduate School of Information Sciences,
Tohoku University, Sendai 980--8579, Japan.
email: \texttt{mharada@tohoku.ac.jp}}
and
Koji Momihara\thanks{
Division of Natural Science, Faculty of Advanced Science and Technology, 
Kumamoto University, Kumamoto 860--8555, Japan.
email: \texttt{momihara@educ.kumamoto-u.ac.jp}.}
}

\maketitle

\begin{abstract}
In 2013, Nebe and Villar gave a series of ternary self-dual codes of
length $2(p+1)$ for a prime $p$ congruent to $5$ modulo $8$.
As a consequence, the third ternary extremal self-dual code
of length $60$ was found.
We show that the ternary self-dual code contains codewords which form
a Hadamard matrix of order $2(p+1)$
when $p$ is congruent to $5$ modulo $24$.
In addition, it is shown that
the ternary self-dual code is generated by the rows of 
the  Hadamard matrix.
We also demonstrate that the third ternary extremal self-dual code of length $60$ 
contains at least two inequivalent Hadamard matrices.
\end{abstract}

\section{Introduction}\label{sec:Intro}

Self-dual codes are one of the most interesting classes of codes.
This interest is justified by many combinatorial objects
and algebraic objects related to self-dual codes
(see e.g., \cite{RS-Handbook}).
A Hadamard matrix is a kind of orthogonal matrix appearing in many 
research areas of Mathematics and practical applications (see e.g., \cite{Seberry}
and~\cite{SeberryYamada}). One of the 
interesting and successful applications of Hadamard matrices is their 
use as codes. In particular, a special class of Hadamard matrices can 
give rise to self-dual codes as their row spaces.
In this paper, we are interested in Hadamard matrices related to ternary 
self-dual codes found by Nebe and Villar~\cite{NV}.

A ternary self-dual code $C$ of length $n$
is an $[n,n/2]$ code over the finite field of
order $3$ satisfying
$C=C^\perp$, where $C^\perp$ is the dual code  of $C$.
A ternary self-dual code  of length $n$ exists if and only if $n$ is divisible by four.
It was shown in~\cite{MS-bound} that
the minimum weight $d$ of a ternary self-dual code of length $n$
is bounded by $d\leq 3 \lfloor n/12 \rfloor+3$.
If $d=3\lfloor n/12 \rfloor+3$, then the code is called {extremal}.
For $n \in \{4,8,12,\ldots,64\}$,
it is known that there is a ternary extremal self-dual code of length $n$ (see~\cite[Table~6]{Huffman}).
The ternary extended quadratic residue codes and 
the Pless symmetry codes are well known families of ternary (self-dual) codes.
It is known that
the ternary extended quadratic residue code $QR_{60}$ of length $60$ and 
the Pless symmetry code $P_{60}$ of length $60$ 
are ternary extremal self-dual codes (see~\cite[Table~XII]{RS-Handbook}).
In 2013, Nebe and Villar~\cite{NV} gave a series of ternary self-dual codes 
of length $2(p + 1)$ for all primes $p \equiv 5 \pmod 8$.
As a consequence, the third ternary extremal self-dual code
of length $60$ was found.

A {Hadamard} matrix $H$ of order $n$ is an $n \times n$ matrix
whose entries are from $\{ 1,-1 \}$ such that $H H^T = nI_n$,
where $H^T$ is the transpose of $H$ and $I_n$ is the identity matrix of order $n$.
It is known that
the order $n$ is necessarily $1,2$, or a multiple of $4$.
Recently, Tonchev~\cite{T} studied Hadamard matrices of order $n$
formed by codewords of weight $n$ in ternary extremal self-dual codes of length $n$,
especially the extended quadratic residue codes and
the Pless symmetry codes.
{From} the construction, the extended quadratic residue code contains a   type I
Paley-Hadamard matrix.
The Pless symmetry code contains a type II Paley-Hadamard matrix~\cite{Pless}.
Tonchev~\cite{T} showed that the Pless symmetry code of length $36$ contains exactly
two inequivalent Hadamard matrices of order $36$.
This motivates us to study the existence of 
Hadamard matrices of order $n$ 
formed by codewords of weight $n$ in
ternary self-dual codes found by Nebe and Villar~\cite{NV}.

The paper is organized as follows.
In Section~\ref{Sec:2},
definitions, notations and basic results are given.
Especially, we review the construction of ternary self-dual codes $NV^{(a)}(p)$
in~\cite{NV} of length $2(p+1)$, where $p$ is a prime with
$p \equiv 5 \pmod{8}$ and $a \in \{1,-1\}$.
In Section~\ref{Sec:H}, we show that
$NV^{(a)}(p)$ contains $2(p+1)$ codewords of weight $2(p+1)$ which form
a Hadamard matrix $H_{NV^{(a)}(p)}$ of order $2(p+1)$ for any prime
$p \equiv 5 \pmod{24}$ and $a \in \{1,-1\}$ (see 
Theorem~\ref{thm}, which is our main theorem of this paper).
We also give characterizations of the Hadamard matrices $H_{NV^{(a)}(p)}$ of order $2(p+1)$.
In particular, it is shown that
the ternary self-dual code $NV^{(a)}(p)$ is generated by the rows of 
the Hadamard matrix $H_{NV^{(a)}(p)}$.
This gives an alternative construction of the ternary self-dual code $NV^{(a)}(p)$.
By Theorem~\ref{thm}, 
the third ternary extremal self-dual code ${NV^{(1)}(29)}$
of length $60$, which was found in~\cite{NV}, 
contains a Hadamard matrix of order $60$.
In Section~\ref{Sec:60}, our computer search shows that $NV^{(1)}(29)$ contains one more 
Hadamard matrix of order $60$.
Finally, in Section~\ref{Sec:4nega}, we demonstrate that the currently known three
ternary extremal
self-dual codes of length $60$ are constructed as four-negacirculant codes.

\section{Preliminaries}\label{Sec:2}

In this section, we give definitions and some known results
of ternary self-dual codes and Hadamard matrices used in this paper.
Especially, we give details for the construction of ternary self-dual codes $NV^{(a)}(p)$
in~\cite{NV} of length $2(p+1)$, where $p$ is a prime with
$p \equiv 5 \pmod{8}$ and $a \in \{1,-1\}$.

\subsection{Ternary self-dual codes}
Let $\FF_3=\{0,1,2\}$ denote the finite field of order $3$.
A ternary $[n,k]$ \emph{code} $C$ is a $k$-dimensional vector subspace
of $\FF_3^n$.
All codes in this paper are ternary.
The parameter $n$ is called the \emph{length} of $C$.
A \emph{generator matrix} of $C$
is a $k \times n$ matrix whose rows are a basis of $C$.
The \emph{weight} $\wt(x)$ of a vector $x$ of $\FF_3^n$ is 
the number of non-zero components of $x$.
A vector of $C$ is called a \emph{codeword}.
The minimum non-zero weight of all codewords in $C$ is called
the \emph{minimum weight} of $C$.
The \emph{weight enumerator} of $C$ is given by $\sum_{c \in C} y^{\wt(c)}
\in \ZZ[y]$.

The {\em dual} code $C^{\perp}$ of a ternary code
$C$ of length $n$ is defined as
$
C^{\perp}=
\{x \in \FF_3^n \mid x \cdot y = 0 \text{ for all } y \in C\},
$
where $x \cdot y$ is the standard inner product.
A ternary code $C$ is \emph{self-dual} if $C=C^\perp$.
A ternary self-dual code  of length $n$ exists if and only if $n$ is divisible by four.
Two ternary codes $C$ and $C'$ are \emph{equivalent} if there is a
monomial matrix $P$ over $\mathbb{F}_3$ with $C' = C \cdot P$,
where $C \cdot P = \{ x P\mid  x \in C\}$.
We denote two equivalent ternary codes $C$ and $D$ by $C \cong D$.
All ternary self-dual codes were classified in~\cite{CPS}, \cite{HM}, \cite{MPS} 
and~\cite{PSW} for lengths up to $24$.
 
\subsection{Ternary extremal self-dual codes}

It was shown in~\cite{MS-bound} that
the minimum weight $d$ of a ternary self-dual code of length $n$
is bounded by $d\leq 3 \lfloor n/12 \rfloor+3$.
If $d=3\lfloor n/12 \rfloor+3$, then the code is called \emph{extremal}.
For $n \in \{4,8,12,\ldots,64\}$,
it is known that there is a ternary extremal self-dual code of length $n$ (see~\cite[Table~6]{Huffman}).
By the Assmus--Mattson theorem~\cite{AM}, the supports of codewords of minimum
weight in a ternary extremal self-dual code of length divisible by $12$
form a $5$-design.
This is a reason for our interest in ternary extremal self-dual codes of length divisible by $12$.

The weight enumerator of a ternary extremal self-dual code of length $n$ is uniquely
determined for each $n$~\cite{MS-bound}.
The number $A_n$ of codewords of weight $n$ in a 
ternary extremal self-dual code of length $n$ is listed in Table~\ref{Tab:An} for
$n=12,24,36,48,60$ (see~\cite{T}).
Note that $A_n=2n$ for $n=12,24,48$.

\begin{table}[thb]
\caption{Numbers $A_n$ of codewords of weight $n$}
\label{Tab:An}
\centering
\medskip
{\small
\begin{tabular}{c|ccccc}
\noalign{\hrule height0.8pt}
$n$       &12 &24 &36 &48 &60 \\
\hline
$A_n$   &
 24 
& 48
& 888
& 96
& 41184\\
\noalign{\hrule height0.8pt}
\end{tabular}
}
\end{table}

The ternary extended quadratic residue codes and 
the Pless symmetry codes are well known families of ternary (self-dual) codes.
More precisely,
the extended quadratic residue code $QR_{p+1}$ of length $p+1$
is a ternary self-dual code when 
$p$ is a prime such that $p \equiv -1 \pmod{12}$ (see~\cite[Chapter~6]{HP}).
The Pless symmetry code $P_{2q+2}$ of length $2q+2$ is
a ternary self-dual code when $q$ is a prime power
such that $q \equiv -1 \pmod 6$~\cite{Pless} (see also~\cite[Chapter~10]{HP}).
The extended quadratic residue codes $QR_{n}$ and the Pless symmetry codes 
$P_{n}$ yield ternary extremal self-dual codes when $n \le 60$  
(see~\cite{RS-Handbook}).
More precisely, 
$P_{36}$ is the currently known ternary extremal self-dual code of length $36$,
$QR_{48}$ and $P_{48}$ are the currently known ternary extremal self-dual codes
of length $48$.
In addition, 
$QR_{60}$ and $P_{60}$ are ternary extremal self-dual codes of length $60$.

\subsection{Ternary self-dual codes given in~\cite{NV}}
\label{sec:NV}

In 2013, Nebe and Villar~\cite{NV} gave a new series of ternary self-dual codes $NV^{(a)}(p)$
of length $2(p + 1)$ for all primes $p \equiv 5 \pmod 8$ and $a \in \{1,-1\}$
(see also~\cite[Section~4]{BCV} for the details).
Here, we review the construction of the ternary self-dual codes $NV^{(a)}(p)$.

Suppose that $p \equiv 5 \pmod 8$.
Let $\FF_p=\{0,1,\ldots,p-1\}$ denote the finite field of order $p$.
Let $\chi$ denote the quadratic character of $\FF_p$.
Define two $p \times p$ matrices $R_X=({r_X}_{a,b})$ and $R_Y=({r_Y}_{a,b})$ as follows
\begin{align*}
{r_X}_{a,b}=&
\begin{cases}
0, & \text{ if }a=b \text{ or }b-a \text{ is not a nonzero square  in } \FF_p,\\
\chi(c), & \text{ if }b-a \text{ is a nonzero square $c^2$ in } \FF_p,\\
\end{cases}
\\
{r_Y}_{a,b}=&
\begin{cases}
0, & \text{ if }a=b \text{ or }2(b-a) \text{ is not a nonzero square  in }\FF_p,\\
\chi(c), & \text{ if }2(b-a) \text{ is a nonzero square $c^2$ in } \FF_p,\\
\end{cases}
\end{align*}
where rows and columns of $R_X$ and $R_Y$ are indexed by the elements of
$\FF_p$ with a fixed ordering.
Then 
define two $(p+1) \times (p+1)$ matrices $X$ and $Y$ as follows
\[
X=
\left(
\begin{array}{cccc}
0 & 1 & \cdots & 1 \\
-1 & &&\\
\vdots & & R_X& \\
-1 & &&\\
\end{array}
\right)
\text{ and }
Y=
\left(
\begin{array}{cccc}
0 & 0 & \cdots & 0 \\
0 & &&\\
\vdots & & R_Y& \\
0 & &&\\
\end{array}
\right).
\]
In addition, define two $2(p+1) \times 2(p+1)$ matrices as follows
\[
B_w=
\left(
\begin{array}{rr}
X & Y \\
-Y^T & X^T
\end{array}
\right) \text{ and }
B_{\epsilon w}=
\left(
\begin{array}{rr}
-Y^T & X^T\\
-X & -Y 
\end{array}
\right).
\]
Throughout this paper,
let  $I_n$ denote the identity matrix of order $n$.
For $a=1$ and $-1$, 
let $NV^{(a)}(p)$ denote the ternary code generated by the matrix $M$,
where
\[
M=
\begin{cases}
a I_{2(p+1)} + B_w, &\text{ if } p \equiv 5 \pmod{24}, \\
a I_{2(p+1)} + B_w +B_{\epsilon w}, &\text{ if } p \equiv 13 \pmod{24}.
\end{cases}
\]
Then $NV^{(a)}(p)$ is self-dual~\cite{NV} (see also~\cite[Theorem~8]{BCV}).

\begin{prop}[Nebe and Villar~\cite{NV}]
$NV^{(1)}(29) \cong NV^{(-1)}(29)$ and 
$ QR_{60}\not \cong NV^{(1)}(29) \not \cong P_{60}$.
\end{prop}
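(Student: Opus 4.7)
My plan splits along the two assertions. For the equivalence $NV^{(1)}(29) \cong NV^{(-1)}(29)$, I would reduce everything to a statement about the matrix $B_w$. Since multiplying rows of a generator matrix by nonzero scalars leaves the code unchanged, $NV^{(-1)}(29)$ is also generated by $I_{60} - B_w$ (obtained from $-I_{60} + B_w$ by negating every row). A monomial matrix $P$ over $\FF_3$ satisfying $P^{-1} B_w P = -B_w$ would then give $(I_{60} + B_w) P = P(I_{60} - B_w)$, whose rows span the same code as $I_{60} - B_w$; this yields $NV^{(1)}(29) \cdot P = NV^{(-1)}(29)$. The natural candidate for $P$ comes from the $\FF_{29}^\times$-symmetry underlying the Paley-type matrices $R_X$ and $R_Y$: multiplication by a fixed non-square in $\FF_{29}$ sends squares to non-squares, and $\chi$ evaluated at a non-square is $-1$, so a suitably chosen signed permutation built from such a multiplication (combined if needed with a swap of the two blocks of $B_w$) should flip the sign of $B_w$.

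For the two inequivalences, I would appeal to an invariant that distinguishes the three codes. Since every ternary extremal self-dual code of length $60$ shares the same weight enumerator, a finer invariant is required; the natural choice is the order of the monomial automorphism group. It is classical that $\Aut(QR_{60})$ contains $\mathrm{PSL}(2,59)$ of order $58 \cdot 59 \cdot 60 / 2 = 102660$, and $\Aut(P_{60})$ contains $\mathrm{PSL}(2,29)$ of order $28 \cdot 29 \cdot 30 / 2 = 12180$. A direct, computer-assisted computation of $\Aut(NV^{(1)}(29))$ should yield an order that differs from both, forcing the two inequivalences.

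The main obstacle is the equivalence part: while the block structure of $B_w$ makes it plausible that an involutive monomial $P$ with $P^{-1} B_w P = -B_w$ exists, writing one down explicitly requires careful bookkeeping of how multiplication in $\FF_{29}^\times$ interacts with the $R_X$ versus $R_Y$ distinction and with the $2 \times 2$ block shape of $B_w$. For the specific prime $p = 29$ the cleanest route is a brute-force search through monomial matrices of a promising form, or a direct code-equivalence test between the generator matrices $I_{60} + B_w$ and $-I_{60} + B_w$ using computer algebra. Once this is done, the two inequivalences reduce to the essentially mechanical task of comparing three automorphism group orders.
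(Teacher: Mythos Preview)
The paper does not prove this proposition at all: it is stated as a result of Nebe and Villar and attributed to~\cite{NV}, with no argument given. There is therefore no proof in the present paper to compare your attempt against.

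That said, your outline is a reasonable independent verification strategy, and in fact the obstacle you flag in the equivalence part dissolves immediately. Using the relations $X^T=-X$ and $Y^T=-Y$ quoted later in the paper, one has
\[
B_w=\begin{pmatrix} X & Y\\ Y & -X\end{pmatrix},
\]
and the monomial matrix $P=\begin{pmatrix} 0 & I_{p+1}\\ -I_{p+1} & 0\end{pmatrix}$ satisfies $B_wP=-PB_w$ by a one-line block computation. No appeal to the $\FF_{29}^\times$-action or to a computer search is needed; your reduction then gives $NV^{(1)}(p)\cdot P=NV^{(-1)}(p)$ for every prime $p\equiv 5\pmod{24}$, not just $p=29$.

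For the inequivalences your plan is fine in spirit, but watch one detail: you only note that $\Aut(QR_{60})$ and $\Aut(P_{60})$ \emph{contain} $\mathrm{PSL}(2,59)$ and $\mathrm{PSL}(2,29)$, so comparing $|\Aut(NV^{(1)}(29))|$ against $102660$ and $12180$ is not by itself conclusive. You need the full monomial automorphism group orders of all three codes (these are known, or are obtained by the same computer calculation), after which the comparison is decisive.
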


The above proposition means that $NV^{(1)}(29)$ is the third ternary extremal self-dual code
of length $60$.
In this paper, we denote the  code  $NV^{(1)}(29)$  by $NV_{60}$.

\subsection{Hadamard matrices and results in~\cite{T}}
A \emph{Hadamard} matrix $H$ of order $n$ is an $n \times n$ matrix
whose entries are from $\{ 1,-1 \}$ such that $H H^T = nI_n$,
where $H^T$ is the transpose of $H$.
It is known that
the order $n$ is necessarily $1,2$, or a multiple of $4$.
A Hadamard matrix $H$  of order $n$ is called \emph{skew} if $H=A+I_n$, where
$A=-A^T$.
Two Hadamard matrices $H$ and $K$ are said to be \emph{equivalent}
if there is $(1,-1,0)$-monomial matrices $P$ and $Q$ with
$K = PHQ$.
An \emph{automorphism} of a Hadamard matrix $H$ is
an equivalence of $H$ to itself, i.e., a pair $(P,Q)$ of monomial
matrices $P$ and $Q$ such that $H = PHQ$.
The set of all automorphisms of $H$ forms a group, 
called the \emph{automorphism group} of $H$, 
under the component-wise product: $(P_1,Q_1)(P_2,Q_2)=(P_1P_2,Q_1Q_2)$.  
%

Recently, Tonchev~\cite{T} studied Hadamard matrices of order $n$
formed by codewords of weight $n$ in ternary extremal self-dual codes of length $n$,
especially the extended quadratic residue codes and
the Pless symmetry codes.
In the context of Hadamard matrices, we consider the element $0,1,2$ of $\FF_3$ as 
$0,1,-1$ of $\ZZ$, throughout this paper.
It is trivial that $n \equiv 0 \pmod{12}$ if a  ternary (extremal) self-dual code of length $n$
contains a Hadamard matrix formed by codewords of weight $n$.
This is another reason for our interest in ternary extremal self-dual codes of length divisible by $12$.

{From} the construction, the extended quadratic residue code contains a   type I
Paley-Hadamard matrix.
The Pless symmetry code contains a type II Paley-Hadamard matrix~\cite{Pless}.
Tonchev~\cite{T} showed that $P_{36}$ contains exactly
two inequivalent Hadamard matrices of order $36$.
In addition, Tonchev~\cite{T} gave a natural question, namely, 
is there any other ternary extremal self-dual code of length $36$, $48$, or $60$ 
which contains a Hadamard matrix?
This motivates us to study the existence of 
Hadamard matrices of order $2(p+1)$ 
formed by codewords of weight $2(p+1)$ in
the ternary self-dual codes $NV^{(a)}(p)$
found by Nebe and Villar~\cite{NV}.

\section{Hadamard matrices related to $NV^{(a)}(p)$ }
\label{Sec:H}

Throughout this section,
suppose that $p$ is a prime with $p \equiv 5 \pmod{24}$.
In this section, we show that
$NV^{(a)}(p)$ contains $2(p+1)$ codewords of weight $2(p+1)$ which form
a Hadamard matrix $H_{NV^{(a)}(p)}$ of order $2(p+1)$ for
$a \in \{1,-1\}$.
We also give characterizations of the Hadamard matrices $H_{NV^{(a)}(p)}$ of order $2(p+1)$.

Let $X$ and $Y$ be the $(p+1) \times (p+1)$ matrices as defined  in Section~\ref{sec:NV}.
As described there, 
for $a=1$ and $-1$,
the ternary code $NV^{(a)}(p)$ generated by the following matrix
\[
a I_{2(p+1)} + 
\left(
\begin{array}{rr}
X & Y \\
-Y^T & X^T
\end{array}
\right)
\]
is a ternary self-dual code~\cite{NV} (see also~\cite[Theorem~8]{BCV}).

The following is our main theorem of this paper.

\begin{thm}\label{thm}
Suppose that $p \equiv 5 \pmod{24}$ and $a \in \{1,-1\}$.
Then the ternary self-dual code $NV^{(a)}(p)$ of length $2(p+1)$
contains $2(p+1)$ codewords of weight $2(p+1)$ which form
a Hadamard matrix of order $2(p+1)$.
\end{thm}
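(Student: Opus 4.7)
The plan is to exhibit the required Hadamard matrix explicitly as $H=LM$, where $M=aI_{2(p+1)}+B_w$ is the given generator matrix of $NV^{(a)}(p)$ and
\[
L\;=\;\begin{pmatrix}I_{p+1}&I_{p+1}\\ I_{p+1}&-I_{p+1}\end{pmatrix}.
\]
Every row of $H$ is a sum or difference of two rows of $M$, so it automatically lies in $NV^{(a)}(p)$. What remains is to verify (i) that every entry of $H$ is $\pm 1$, so each row has weight $2(p+1)$, and (ii) that $HH^{T}=2(p+1)I$.

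The key inputs for (i) are two consequences of $p\equiv 5\pmod 8$. First, both $R_X$ and $R_Y$ are skew-symmetric: a short character computation shows that transposing an entry multiplies it by $\chi(\sqrt{-1})$, and $\chi(\sqrt{-1})=-1$ precisely because $-1$ is a square but not a fourth power mod $p$. Consequently $X^{T}=-X$, $Y^{T}=-Y$, and $B_w^{T}=-B_w$. Secondly, $2$ is a nonsquare mod $p$, so $(R_X)_{s,t}$ is nonzero iff $t-s$ is a nonzero square and $(R_Y)_{s,t}$ is nonzero iff $t-s$ is a nonsquare; exactly one of them is nonzero whenever $s\neq t$. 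Expanding, the two ``boundary'' rows of $H$ (those coming from the extra coordinate adjoined to $\FF_p$) are immediately $\pm 1$-valued, and for an ``interior'' row index $s\in\FF_p$, the entries at positions $t\neq s$ reduce, after using $R_X^{T}=-R_X$ and $R_Y^{T}=-R_Y$ to rewrite $-(R_X)_{t,s}=(R_X)_{s,t}$ and $-(R_Y)_{t,s}=(R_Y)_{s,t}$, to the combinations $(R_X\pm R_Y)_{s,t}$ and $(R_Y\mp R_X)_{s,t}$; each of these is $\pm 1$ because exactly one of $(R_X)_{s,t},(R_Y)_{s,t}$ is nonzero.

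For (ii), $B_w^{T}=-B_w$ gives $MM^{T}=(aI+B_w)(aI-B_w)=I-B_w^{2}$. Since $-Y^{T}=Y$ and $X^{T}=-X$, one rewrites $B_w=\bigl(\begin{smallmatrix}X&Y\\ Y&-X\end{smallmatrix}\bigr)$, and squaring this block matrix yields
\[
B_w^{2}=\begin{pmatrix}X^{2}+Y^{2}&XY-YX\\ YX-XY&X^{2}+Y^{2}\end{pmatrix}.
\]
The commutation $XY=YX$ follows from $R_XR_Y=R_YR_X$ (both live in the commutative algebra of Cayley matrices on $(\FF_p,+)$) together with the vanishing of the row and column sums of $R_Y$ (forced by its skewness), which absorb the contribution of the extra row and column of $X$ and $Y$. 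A further block expansion then reduces the target identity $X^{2}+Y^{2}=-p\,I_{p+1}$ to
\[
R_X^{2}+R_Y^{2}=J_p-p\,I_p.
\]

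This last identity is the main technical obstacle, and the cleanest way I see is to work complex. Set $Q:=R_X+iR_Y$. Its off-diagonal entries are exactly $\chi_{4}(t-s)$ for a suitable character $\chi_{4}$ of $\FF_p^{*}$ of order $4$ (taking the values $1,-i,-1,i$ on the four cosets of the fourth powers), so $Q$ is a quartic Paley-type matrix and the standard character-sum evaluation yields $QQ^{*}=p\,I_p-J_p$. On the other hand, because $R_X,R_Y$ are real and commute and because skewness together with $\chi_{4}(-1)=-1$ forces $Q^{*}=-R_X+iR_Y$, a direct expansion gives $QQ^{*}=-(R_X^{2}+R_Y^{2})$, producing the desired identity. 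Putting everything together, $B_w^{2}=-p\,I_{2(p+1)}$, so $MM^{T}=(p+1)I$ and
\[
HH^{T}\;=\;L\,MM^{T}\,L^{T}\;=\;(p+1)\,LL^{T}\;=\;2(p+1)\,I_{2(p+1)},
\]
which is the Hadamard property, completing the proof.
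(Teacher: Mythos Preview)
Your proof is correct and follows essentially the same route as the paper: your matrix $H=LM$ coincides, up to negating the lower block of rows, with the paper's explicit matrix $H_{NV^{(a)}(p)}$, and the verification in both cases rests on the identities $X^{T}=-X$, $Y^{T}=-Y$, $XY=YX$, and $X^{2}+Y^{2}=-pI_{p+1}$. The paper simply cites these identities from the literature, whereas you supply self-contained proofs---your quartic-character computation of $R_X^{2}+R_Y^{2}=J_p-pI_p$ via $Q=R_X+iR_Y$ being a nice addition---so your argument is more detailed but not a different approach.
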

\begin{proof}
Since $NV^{(a)}(p)$ is generated by the following matrix
\[
\left(
\begin{array}{cc}
X+aI_{p+1} & Y \\
-Y^T & X^T+aI_{p+1}
\end{array}
\right),
\]
the rows of the following two matrices
\begin{multline*}
\left(
\begin{array}{cc}
X-Y^T+aI_{p+1} & Y + X^T +aI _{p+1}
\end{array}
\right)
\text{ and }
\\
\left(
\begin{array}{cc}
-Y^T-X-aI_{p+1} & X^T-Y+aI_{p+1}
\end{array}
\right)
\end{multline*}
are codewords of $NV^{(a)}(p)$.
{From} the definition of $X$ and $Y$, the $2(p+1)$ codewords has weight $2(p+1)$.
In addition, we regard the following matrix as a $\ZZ$-matrix
\begin{equation}\label{eq:H}
H_{NV^{(a)}(p)}=
\left(
\begin{array}{cc}
X-Y^T+aI_{p+1} & Y + X^T +aI_{p+1} \\
-Y^T-X-aI_{p+1} & X^T-Y+aI_{p+1}
\end{array}
\right).
\end{equation}
Since it is known~\cite{BCV} that
\[
X^T=-X, Y^T=-Y, XY=YX \text{ and } X^2+Y^2=-pI_{p+1},
\]
$H_{NV^{(a)}(p)}$ is a Hadamard matrix of order $2(p+1)$.
\end{proof}

Now we give characterizations of  Hadamard matrices $H_{NV^{(a)}(p)}$ of order $2(p+1)$.

\begin{prop}
Let $H_{NV^{(a)}(p)}$ denote the Hadamard matrix given in~\eqref{eq:H}.
Then $a H_{NV^{(a)}(p)}$ is a skew Hadamard matrix for $a=1$ and $-1$.
\end{prop}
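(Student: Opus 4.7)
The plan is to verify the defining condition for skew-Hadamardness, namely that $(aH_{NV^{(a)}(p)}) + (aH_{NV^{(a)}(p)})^T = 2I_{2(p+1)}$, by a direct block computation that exploits the identities $X^T=-X$ and $Y^T=-Y$ recorded in the proof of Theorem~\ref{thm}.

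First I would simplify the four blocks of $H_{NV^{(a)}(p)}$ in~\eqref{eq:H} by substituting $X^T=-X$ and $Y^T=-Y$, rewriting
\[
H_{NV^{(a)}(p)}=
\left(
\begin{array}{cc}
X+Y+aI_{p+1} & -X+Y+aI_{p+1} \\
-X+Y-aI_{p+1} & -X-Y+aI_{p+1}
\end{array}
\right).
\]
Then I would transpose blockwise, again using $X^T=-X$ and $Y^T=-Y$, obtaining an expression in which the diagonal blocks negate $X+Y$ while preserving the $\pm aI_{p+1}$ term, and the off-diagonal blocks negate $-X+Y$ while flipping the sign of $aI_{p+1}$.

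Adding $H_{NV^{(a)}(p)}$ to its transpose block by block, the off-diagonal entries cancel (their matrix parts are negatives of each other, and the $\pm aI_{p+1}$ terms have opposite signs), while each diagonal block collapses to $2aI_{p+1}$. Hence $H_{NV^{(a)}(p)}+H_{NV^{(a)}(p)}^T=2aI_{2(p+1)}$, and multiplying by $a$ yields $aH_{NV^{(a)}(p)}+(aH_{NV^{(a)}(p)})^T=2I_{2(p+1)}$ since $a^2=1$. Because $aH_{NV^{(a)}(p)}$ is also a Hadamard matrix (scaling by $\pm1$ preserves the Hadamard property), this establishes that $aH_{NV^{(a)}(p)}-I_{2(p+1)}$ is skew-symmetric, as required.

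There is no real obstacle here; the argument is a one-step block verification, and the only thing to be careful about is sign bookkeeping across the four blocks when transposing. Note that the identities $X^2+Y^2=-pI_{p+1}$ and $XY=YX$ are not needed for this proposition, as they were already consumed in proving that $H_{NV^{(a)}(p)}$ is Hadamard; here only the skew-symmetry of $X$ and $Y$ matters.
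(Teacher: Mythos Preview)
Your argument is correct and follows exactly the same route as the paper: the paper's proof is the single line ``The claim follows from that $H_{NV^{(a)}(p)} + H_{NV^{(a)}(p)}^T = 2aI_{2(p+1)}$,'' and your block computation simply spells out why that identity holds.
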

\begin{proof}
The claim follows from that $H_{NV^{(a)}(p)} +H_{NV^{(a)}(p)}^T=2aI_{2(p+1)}$.
\end{proof}

Note that 
$H_{NV^{(a)}(p)}$ has the form
$
H_{NV^{(a)}(p)}=
\left(
\begin{array}{cc}
A & B \\
-B^T & A^T
\end{array}
\right)$,
where
\begin{align*}
A=&X-Y^T+aI_{p+1}
=
\left(
\begin{array}{cccc}
a& 1 & \cdots & 1\\
-1   & &&\\
\vdots& &  A' & \\
-1   &&&\\
\end{array}
\right) \text{ and}\\
B=&Y + X^T +aI_{p+1}
=
\left(
\begin{array}{cccc}
a& -1 & \cdots & -1\\
1   & &&\\
\vdots& &  B' & \\
1   &&&\\
\end{array}
\right),
\end{align*}
for some $p\times p$ matrices $A'$ and $B'$. 
Let $\omega$ be a fixed primitive element of $\mathbb{F}_p$. Define 
\begin{equation}\label{eq:cosetsCi}
C_i=\omega^i\langle \omega^4\rangle, i=0,1,2,3, 
\end{equation}
which are  the cosets of the multiplicative subgroup of index $4$ of $\mathbb{F}_p$. 
Note that $C_1$ and $C_3$ are interchanged if we choose $\omega^{-1}$ as a primitive element instead of $\omega$.  
Since $p\equiv 5\pmod{8}$, we have $-1\in C_2$ and 
$2\in NQ=C_1\cup C_3$, where $NQ$ denotes the set of nonsquares 
in ${\mathbb F}_p$.
%
Hence, $-2\in C_1$ or $-2\in C_3$, depending on the choice of $\omega$.
We denote by $C_\epsilon$ the coset containing $-2$, that is,
\begin{equation}\label{eq:ep}
-2 \in C_\epsilon.
\end{equation}
Then $A'=(a'_{s,t})$ and $B'=(b'_{s,t})$ can be written as 
\begin{equation}\label{eq:AB}
\begin{split}
a'_{s,t}&=\begin{cases}
a, &\text{ if $t=s$, }\\
1, &\text{ if $t-s\in C_0\cup C_{\epsilon}$, }\\
-1, &\text{ if $t-s\in C_2\cup C_{\epsilon+2}$, }
\end{cases}
\\
b'_{s,t}&=\begin{cases}
a, &\text{ if $t=s$, }\\
1, &\text{ if $t-s\in C_2\cup C_{\epsilon}$, }\\
-1, &\text{ if $t-s\in C_0\cup C_{\epsilon+2}$.}
\end{cases}
\end{split}
\end{equation}
Throughout this section, we reduce the subscript of  $C_{i}$ modulo $4$.

Let $G$ be an additively written abelian group of order $v$.  Two subsets  $D_1$ and $D_2$ of $G$ with $k=|D_1|=|D_2|$ are called {\it $(v,k,\lambda)$ supplementary difference sets} if 
the list of differences $x-y$, $x,y \in D_i$, $i=1,2$, represents every nonzero element of $G$ exactly $\lambda$ times.
Fixing an ordering for the elements of $G$, we define a matrix $M=(m_{i,j})$ by   
\[
m_{i,j}=\begin{cases}
1,& \text{ if }  j-i\in X, \\
-1, & \text{ if } j-i\not\in X,
\end{cases}
\]
for $X\subset G$. 
The matrix $M$ is called a \emph{type-1} matrix of $X$.

The following construction of Hadamard matrices easily 
follows from~\cite[Corollary~4.5~(i) and Lemma~4.8]{Seberry}.

\begin{lem}\label{lem:SDS}
Let $D_i $, $i=1,2$, be $(2m+1,m,m-1)$ supplementary difference sets in an abelian group $G$ of order $v=2m+1$.  
Furthermore, let  
$M_1$ (resp.\ $M_2$) be the type-1 matrix of $D_1$ (resp.\ $D_2$). 
Then
\[
H(D_1,D_2)=\begin{pmatrix} 
1  &1 &  \1_v&  -\1_v  \\
-1  & 1 & -\1_v& -\1_v \\
-\1_v^T  &  \1_v^T &-M_1 & -M_2 \\
\1_v ^T & \1_v^T&M_2^T & -M_1^T
 \end{pmatrix}
\]
is a Hadamard matrix of order $4(m+1)$,
where $\1_v$ denotes the all-one vector of length $v$.
\end{lem}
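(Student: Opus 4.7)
The plan is to verify $H H^T = 4(m+1) I_{4(m+1)}$ directly by exploiting the $4 \times 4$ block structure of $H$ with block sizes $1, 1, v, v$. Every entry of $H$ lies in $\{1, -1\}$ and the order of $H$ is $2 + 2v = 4(m+1)$, so what remains is to compute the sixteen block products $\sum_{k=1}^{4} H_{ik} (H_{jk})^T$ and show that they produce $4(m+1)\, I$ on the diagonal and the zero matrix off it.

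Three identities about the type-1 matrices $M_1$ and $M_2$ will do all the work. First, since $|D_i| = m$, every row sum and every column sum of $M_i$ equals $2m - v = -1$, so $\1_v M_i = \1_v M_i^T = -\1_v$. Second, writing $M_i = 2 N_i - J_v$ where $N_i$ is the $\{0,1\}$-matrix with $(N_i)_{g,h} = 1$ iff $h - g \in D_i$, the supplementary difference set hypothesis translates to $N_1 N_1^T + N_2 N_2^T = (2m - (m-1))\, I_v + (m-1)\, J_v$, which after substitution yields
\[
M_1 M_1^T + M_2 M_2^T = 4(m+1)\, I_v - 2\, J_v.
\]
Because $(-D_1, -D_2)$ is again a $(v, m, m-1)$ SDS, the same identity holds for $M_1^T M_1 + M_2^T M_2$. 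Third, since $G$ is abelian, each type-1 matrix is a linear combination of the mutually commuting translation permutation matrices $\{P_g : g \in G\}$, and therefore $M_1 M_2 = M_2 M_1$.

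Given these three inputs, each of the sixteen block products collapses in one line. The diagonal $1 \times 1$ blocks $(1,1)$ and $(2,2)$ both give $2 + 2v = 4(m+1)$. The diagonal $v \times v$ block $(3,3)$ simplifies to $(M_1 M_1^T + M_2 M_2^T) + 2 J_v = 4(m+1)\, I_v$ (the additive $2 J_v$ comes from the two $\1_v^T \1_v$ contributions of the first pair of columns), and similarly $(4,4)$ collapses to the same quantity using $M_1^T M_1 + M_2^T M_2$. The off-diagonal blocks $(1,2), (1,3), (1,4), (2,3), (2,4)$ all vanish by direct application of $\1_v M_i = -\1_v$, and the remaining block $(3,4)$ reduces to $M_2 M_1 - M_1 M_2$, which vanishes by commutativity.

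The only obstacle is the bookkeeping: keeping straight which of $M_i$, $M_i^T$, and which sign appears in each of the sixteen block products, especially in the $(3,4)$ block where both a matrix and its transpose enter and where commutativity of type-1 matrices must be invoked. Once the three identities above are recorded, no further ideas are needed, consistent with the authors' remark that the lemma follows easily from Corollary~4.5~(i) and Lemma~4.8 of~\cite{Seberry}.
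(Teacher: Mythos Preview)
Your proof is correct. The paper itself gives no proof of this lemma at all---it merely states that the construction ``easily follows from Corollary~4.5~(i) and Lemma~4.8'' of Seberry's book---so your direct block-by-block verification of $HH^T = 4(m+1)I$ is a self-contained substitute for that citation. The three identities you isolate (row/column sums of $M_i$ equal $-1$; the SDS condition rewritten as $M_1M_1^T+M_2M_2^T=4(m+1)I_v-2J_v$; and commutativity of type-1 matrices over an abelian group) are exactly what is needed, and your handling of the $(4,4)$ block via the observation that $(-D_1,-D_2)$ is again an SDS is fine (alternatively, since $M_i$ and $M_i^T$ both lie in the commutative group algebra, $M_i^TM_i=M_iM_i^T$, which gives the same conclusion more directly).
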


It is known~\cite{S} that for any prime $p\equiv 5\pmod{24}$
and any $i=0,1,2,3$, 
the sets $C_i\cup C_{i+1}$ and $C_{i+1}\cup C_{i+2}$ are $(p,(p-1)/2,(p-3)/2)$ 
supplementary difference sets in the additive group of $\FF_p$, where $C_i$'s are defined 
as in~\eqref{eq:cosetsCi}
(more generally, the same claim holds for  any prime power $p\equiv 5\pmod{8}$).
By Lemma~\ref{lem:SDS}, 
$H(C_i\cup C_{i+1},C_{i+1}\cup C_{i+2})$ is a Hadamard matrix of order $2(p+1)$.
%
%
Furthermore, from~\eqref{eq:AB}, the following theorem  holds. 
\begin{thm}
Let $H_{NV^{(a)}(p)}$ denote the Hadamard matrix defined as in~\eqref{eq:H}.
Then
$H_{NV^{(1)}(p)}$ and $H_{NV^{(-1)}(p)}$ are equivalent to $H(C_2\cup C_{2+\epsilon},C_0\cup C_{\epsilon+2})$ and  $H(C_0\cup C_{\epsilon},C_2\cup C_{\epsilon})$, respectively,
where $C_\epsilon$ is defined as in~\eqref{eq:ep}.
\end{thm}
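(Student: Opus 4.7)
The plan is to identify the $p\times p$ subblocks $A'$ and $B'$ of $H_{NV^{(a)}(p)}$ with type-$1$ matrices of unions of the cosets $C_i$, and then show that after a suitable row/column permutation (plus sign flips when $a=-1$), $H_{NV^{(a)}(p)}$ becomes the Hadamard matrix $H(D_1,D_2)$ of Lemma~\ref{lem:SDS} with the stated $D_1,D_2$.

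First I would read off from \eqref{eq:AB}, by comparing three pieces of information (diagonal entry, coset where entries equal $+1$, coset where entries equal $-1$), the precise relation between $A', B'$ and type-$1$ matrices. For $a=1$ this yields $A'=-M_1$, with $M_1$ the type-$1$ matrix of $C_2\cup C_{2+\epsilon}$, and $B'=-M_2$, with $M_2$ the type-$1$ matrix of $C_0\cup C_{\epsilon+2}$; for $a=-1$ one obtains $A'=M_1'$ and $B'=M_2'$, where $M_1',M_2'$ are the type-$1$ matrices of $C_0\cup C_\epsilon$ and $C_2\cup C_\epsilon$ respectively. The off-diagonal verification uses only that $C_0\cup C_\epsilon$ and $C_2\cup C_{\epsilon+2}$ (and likewise $C_2\cup C_\epsilon$ and $C_0\cup C_{\epsilon+2}$) partition $\FF_p\setminus\{0\}$.

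Next I would apply the row/column permutation moving the two border rows (row $1$ and row $p+2$) to positions $1$ and $2$, and the two border columns to columns $1$ and $2$. Using the bordered form of $A,B,-B^T,A^T$ displayed just above \eqref{eq:AB}, the permuted matrix equals
\[
\left(\begin{array}{cc|cc}
a & a & \1_p & -\1_p \\
-a & a & -\1_p & -\1_p \\ \hline
-\1_p^T & \1_p^T & A' & B' \\
\1_p^T & \1_p^T & -(B')^T & (A')^T
\end{array}\right).
\]
For $a=1$, substituting $A'=-M_1$ and $B'=-M_2$ produces exactly $H(C_2\cup C_{2+\epsilon},C_0\cup C_{\epsilon+2})$ as defined in Lemma~\ref{lem:SDS}. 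For $a=-1$, after substituting $A'=M_1'$ and $B'=M_2'$, the resulting matrix differs from $H(C_0\cup C_\epsilon,C_2\cup C_\epsilon)$ only by a sign on the $2\times 2$ upper-left corner and on each of the four $p\times p$ lower-right blocks (the $\pm\1_p$ and $\pm\1_p^T$ border strips already agree). I would repair these sign discrepancies simultaneously by negating columns $1$ and $2$ together with rows $3,\dots,2(p+1)$, checking block-by-block that this single combination of flips corrects the corner and the four large blocks without disturbing the border strips.

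The main obstacle is the $a=-1$ case: the row/column permutation alone is insufficient and one must exhibit one consistent combination of row and column sign flips fixing the $2\times 2$ corner and all four $p\times p$ blocks while leaving the border strips untouched. The $a=1$ case, by contrast, collapses to a direct block-by-block identification.
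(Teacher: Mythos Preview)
Your proposal is correct and follows essentially the same approach as the paper, which merely asserts that the theorem ``holds from~\eqref{eq:AB}'' without any further details. You have correctly supplied those details: the identifications $A'=-M_1$, $B'=-M_2$ (for $a=1$) and $A'=M_1'$, $B'=M_2'$ (for $a=-1$), the border-rearranging permutation, and the sign-flip combination for $a=-1$ all check out block by block.
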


Although the proof of the following proposition is somewhat trivial,
we give it for the sake of completeness.

\begin{prop}
The ternary self-dual code $NV^{(a)}(p)$ is generated by the rows of 
the Hadamard matrix $H_{NV^{(a)}(p)}$ defined as in~\eqref{eq:H}.
\end{prop}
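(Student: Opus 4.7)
The plan is to prove both containments between the row span of $H_{NV^{(a)}(p)}$ and the code $NV^{(a)}(p)$. One direction, that every row of $H_{NV^{(a)}(p)}$ is a codeword of $NV^{(a)}(p)$, was already verified inside the proof of Theorem~\ref{thm}. The remaining task is to show the converse: every row of the generator matrix
\[
M = \begin{pmatrix} X + aI_{p+1} & Y \\ -Y^T & X^T + aI_{p+1} \end{pmatrix}
\]
of $NV^{(a)}(p)$ lies in the $\FF_3$-row span of $H_{NV^{(a)}(p)}$.

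To carry this out, I would write $r_i$ and $s_i$ for the $i$-th rows of the upper and lower halves of $M$, and $h_i^t$ and $h_i^b$ for the $i$-th rows of the upper and lower halves of $H_{NV^{(a)}(p)}$. Comparing the blocks in~\eqref{eq:H} against those of $M$, one reads off directly the two identities
\[
h_i^t = r_i + s_i, \qquad h_i^b = s_i - r_i,
\]
so that $2 s_i = h_i^t + h_i^b$ and $2 r_i = h_i^t - h_i^b$. Since $2$ is a unit in $\FF_3$ (with inverse $-1$), these relations exhibit every generator $r_i$ and $s_i$ of $NV^{(a)}(p)$ as an $\FF_3$-linear combination of rows of $H_{NV^{(a)}(p)}$, and hence the two row spaces coincide.

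No real obstacle is anticipated: once the block identities are recorded, the argument is immediate from the invertibility of $2$ modulo $3$. The only step that requires care is the sign bookkeeping between $X^T, Y^T$ and $-X, -Y$, but the relations $X^T = -X$ and $Y^T = -Y$ are already used in the proof of Theorem~\ref{thm} and so can be invoked without further comment.
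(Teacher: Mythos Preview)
Your argument is correct, but it follows a different route from the paper. The paper argues by computing $\rank_3(H_{NV^{(a)}(p)})$: from the skew relation $H_{NV^{(a)}(p)}+H_{NV^{(a)}(p)}^T=2aI_{2(p+1)}$ one gets $\rank_3(H_{NV^{(a)}(p)})\ge p+1$, while $H_{NV^{(a)}(p)}H_{NV^{(a)}(p)}^T\equiv O\pmod 3$ (using $p+1\equiv 0\pmod 3$) forces $\rank_3(H_{NV^{(a)}(p)})\le p+1$; since the rows already lie in the $(p{+}1)$-dimensional code, equality of spans follows. Your approach instead observes that the passage from the generator matrix $M$ to $H_{NV^{(a)}(p)}$ is the block row operation $(r_i,s_i)\mapsto(r_i+s_i,\,s_i-r_i)$, which is invertible over $\FF_3$ because $2$ is a unit. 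Your method is more direct and uses nothing beyond the block identities; the paper's rank argument, on the other hand, isolates a general fact---any skew Hadamard matrix of order $n\equiv 0\pmod 3$ has $3$-rank exactly $n/2$---that does not depend on the particular block structure of $H_{NV^{(a)}(p)}$.
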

\begin{proof}
It is sufficient to show that  $\rank_3(H_{NV^{(a)}(p)})=p+1$. 
Since $H_{NV^{(a)}(p)} +H_{NV^{(a)}(p)}^T=2aI_{2(p+1)}$,
we have
\begin{align*}
2(p+1)&=\rank_3(2aI_{2(p+1)})=\rank_3(H_{NV^{(a)}(p)} +H_{NV^{(a)}(p)}^T)\\
&\le 2\rank_3(H_{NV^{(a)}(p)}), 
\end{align*}  
i.e., $p+1\le \rank_3(H_{NV^{(a)}(p)})$. 
On the other hand, since $p+1\equiv 0\pmod{3}$, 
$H_{NV^{(a)}(p)}H_{NV^{(a)}(p)}^T\equiv O\pmod{3}$, 
where $O$ denotes the $2(p+1) \times 2(p+1)$ zero matrix.
This implies that  $\rank_3(H_{NV^{(a)}(p)})\le p+1$.
This completes the proof. 
\end{proof}

The above proposition gives an alternative construction of the ternary self-dual code $NV^{(a)}(p)$.

\section{Hadamard matrices related to $NV_{60}$ }
\label{Sec:60}

By Theorem~\ref{thm}, the third ternary extremal self-dual code
$NV_{60}$ of length $60$, which was found in~\cite{NV}, 
contains a Hadamard matrix of order $60$.
In this section, our computer search found one more Hadamard matrix of order $60$
in $NV_{60}$. 
All computer calculations in this section were done by programs in the language C 
and programs in \textsc{Magma}~\cite{Magma}.

Any ternary extremal self-dual code of length $60$ contains
$41184$ codewords of weight $60$ (see Table~\ref{Tab:An}).
Let $W_{60}$ be the set of $41184$ codewords of weight $60$ in $NV_{60}$.
It is trivial that 
there is a set $W^+_{60}$ consisting of $20592$ codewords of weight $60$ such that 
\[
W_{60}=W^+_{60} \cup \{2x \mid x \in W^+_{60}\}.
\] 
Let $\rho$ be a map from $\FF_3$ to $\ZZ$ sending $0, 1, 2$
to $0, 1, -1$, respectively.
Define the following set
\[
W^\ZZ_{60}=\{(\rho(x_1),\rho(x_2),\ldots,\rho(x_{60})) \mid
(x_1,x_2,\ldots,x_{60}) \in W^+_{60}
\}  \subset \ZZ^{60}.
\]
Then we define the simple undirected graph $\Gamma$,
whose set of vertices is the set $W^\ZZ_{60}$
and two vertices $x$ and $y$ are adjacent
if $x$ and $y$ are orthogonal, noting that $x,y \in \ZZ^{60}$.
Clearly, a $60$-clique in $\Gamma$ gives a Hadamard matrix.
In addition, in order to find a Hadamard matrix, it is sufficient to consider
only $W^\ZZ_{60}$ as the set of vertices of $\Gamma$.
Due to the computational complexity,
by the above approach, our computer search was able to find two $60$-cliques, 
which imply two inequivalent Hadamard matrices $H_{NV,1}$ and $H_{NV,2}$.
The computation for finding cliques
was performed using the clique finding algorithm {\sc Cliquer}~\cite{Cliquer}.
The computation for verifying the inequivalence of $H_{NV,1}$ and $H_{NV,2}$
was done by the \textsc{Magma} function
\texttt{IsHadamardEquivalent}.
Therefore, we have the following proposition.

\begin{prop}
The third ternary extremal self-dual code
$NV_{60}$ of length $60$ contains at least two inequivalent Hadamard matrices of
order $60$ having as rows codewords of weight $60$.
\end{prop}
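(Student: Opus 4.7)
My plan is to approach this computationally, building on the fact that Theorem~\ref{thm} already guarantees that $NV_{60}$ contains a Hadamard matrix $H_{NV^{(1)}(29)}$ of order $60$ formed by codewords of weight $60$. The task therefore reduces to exhibiting a second such Hadamard matrix and showing it is inequivalent to the first.

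First, I would enumerate the full set of weight-$60$ codewords in $NV_{60}$. From Table~\ref{Tab:An}, there are exactly $A_{60}=41184$ of them, and because the code is ternary they come in scalar pairs $\{x,2x\}$; pick one representative from each pair to form a set $W^+_{60}$ of size $20592$. Apply the standard identification $0\mapsto 0$, $1\mapsto 1$, $2\mapsto -1$ to view these codewords as $\{0,1,-1\}$-vectors of length $60$ in $\ZZ^{60}$. Since each has weight $60$, the image lies in $\{1,-1\}^{60}$, so any $60\times 60$ matrix with rows drawn from this set and pairwise orthogonal rows is automatically a Hadamard matrix of order $60$ formed by codewords of weight $60$.

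Next, define the simple graph $\Gamma$ on $W^\ZZ_{60}$ in which $x$ and $y$ are adjacent whenever $x\cdot y=0$ in $\ZZ$. By the preceding paragraph, $60$-cliques of $\Gamma$ are in bijection (up to row permutation and row sign changes) with the Hadamard matrices we seek. Since the codewords of $H_{NV^{(1)}(29)}$ already form one such clique, $\Gamma$ has at least one $60$-clique; the goal is to find at least one more that is inequivalent as a Hadamard matrix to $H_{NV^{(1)}(29)}$. I would delegate this to the clique-finding package \textsc{Cliquer}~\cite{Cliquer}, searching until two $60$-cliques, say $H_{NV,1}$ and $H_{NV,2}$, are produced.

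Finally, to certify inequivalence, I would apply the \textsc{Magma}~\cite{Magma} function \texttt{IsHadamardEquivalent} to the pair $(H_{NV,1},H_{NV,2})$ and confirm the return value is \texttt{false}. The main obstacle is computational: the graph $\Gamma$ has over $20000$ vertices, and even with \textsc{Cliquer}'s pruning a full enumeration of $60$-cliques is infeasible, so one must be content to exhibit two inequivalent examples rather than classify them. A secondary obstacle is that Hadamard equivalence is sensitive to signed row/column permutations, so a direct combinatorial comparison would be error-prone; using the built-in \textsc{Magma} routine sidesteps this and yields a clean verification.
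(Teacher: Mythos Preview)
Your proposal is correct and follows essentially the same approach as the paper: form the orthogonality graph $\Gamma$ on the $20592$ representatives of weight-$60$ codewords in $NV_{60}$, use \textsc{Cliquer} to locate two $60$-cliques $H_{NV,1}$ and $H_{NV,2}$, and then certify their inequivalence with \textsc{Magma}'s \texttt{IsHadamardEquivalent}. The paper additionally records that $H_{NV,1}$ is equivalent to $H_{NV^{(1)}(29)}$ and reports the automorphism group orders ($24360$ and $812$), but the core argument is identical to yours.
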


We verified by \textsc{Magma} that 
$H_{NV,1}$ and $H_{NV^{(1)}(29)}$ are equivalent, and
$H_{NV,1}$ and $H_{NV,2}$ have automorphism groups of
orders $24360$ and $812$, respectively.
These were done by the \textsc{Magma} functions
\texttt{IsHadamardEquivalent} and
\texttt{HadamardAutomorphismGroup}, respectively.

Now we display  the Hadamard matrix $H_{NV,2}$.
Here, instead of this matrix, 
we display its binary Hadamard matrix 
$B_{NV,2}=(H_{NV,2}+J)/2$,
where $J$ is the $60 \times 60$ all-one matrix.
Let $r_i$ denote the $i$-th row of $B_{NV,2}$.
To save space, the vectors $r_1,r_2,\ldots,r_{60}$
are written in octal using $0=(0,0,0)$, $1=(0,0,1),\ldots,7=(1,1,1)$
in Figure~\ref{Fig:2}.
For example, the first row of $H_{NV,2}$
\begin{align*}
&(1,1,1,1,1,1,1,1,1,1,1,1,1,1,1,1,1,1,1,1,1,1,1,1,1,1,1,1,1,1,\\
&-1,1,1,1,1,1,1,1,1,1,1,1,1,1,1,1,1,1,1,1,1,1,1,1,1,1,1,1,1,1)
\end{align*}
corresponds to $77777777773777777777$.

\begin{figure}[htb]
\centering
{\footnotesize
\begin{align*}
777777777737777777770000000000377777777777101523240306056672\\
762032465106141355640221363475131260237102747172112540476254\\
744065152314302733500427471721225404762504427471722625404762\\
724762032427206141357246517440135564143005716364221301174531\\
710152324730605667200717211057047625453007504427473712625404\\
104427471712625404761057163642053011745311057163641453011745\\
651744065116414302736515237101273350306064762032463206141355\\
646517440635564143021363475044260237126263710152321030605667\\
632476203226720614136203246517214135564116364221361174531260\\
171636422130117453121721105716362545301160324651741413556414\\
574406515201430273352110571636254530117421363475041260237126\\
221363475031260237125523710152335030605655232476200566720614\\
234750442702371262542364221363174531260224221363470531260237\\
523710152335030605665232476203166720614151744065152414302733\\
515237101533350306052747172110140476254550152324760605667206\\
476203246520614135564651744065156414302746515237100273350306\\
316364221301174531263172110571076254530132110571632254530117\\
440651523703027335034324651744013556414334717211050047625453\\
347504427423712625403504427471312625404741523247622056672061\\
363475044220237126253642213634345312602340651523713027335030
\end{align*}
\caption{Binary Hadamard matrix $B_{NV,2}$}
\label{Fig:2}
}
\end{figure}

It is worthwhile to determine whether $C$ contains a Hadamard matrix
which is not equivalent to a Paley-Hadamard matrix
for $C=QR_{60}$ and $P_{60}$.

\section{Four-negacirculant codes of length $60$}
\label{Sec:4nega}

In this section, we demonstrate that the currently known ternary extremal
self-dual codes of length $60$ are constructed as four-negacirculant codes.
All computer calculations in this section were done by 
programs in \textsc{Magma}~\cite{Magma}.

An $n \times n$ \emph{negacirculant} matrix has the following form
\[
\left( \begin{array}{cccccc}
r_0&r_1&r_2& \cdots &r_{n-2} &r_{n-1}\\
2r_{n-1}&r_0&r_1& \cdots &r_{n-3}&r_{n-2} \\
2r_{n-2}&2r_{n-1}&r_0& \cdots &r_{n-4}&r_{n-3} \\
\vdots &\vdots & \vdots &&\vdots& \vdots\\
2r_1&2r_2&2r_3& \cdots&2r_{n-1}&r_0
\end{array}
\right).
\]
Let $A$ and $B$ be $n \times n$ negacirculant matrices.
A ternary $[4n,2n]$ code having the following generator matrix
\begin{equation} \label{eq:4}
\left(
\begin{array}{ccc@{}c}
\quad & {\Large I_{2n}} & \quad &
\begin{array}{cc}
A & B \\
2B^T & A^T
\end{array}
\end{array}
\right)
\end{equation}
is called a \emph{four-negacirculant} code.
Many ternary extremal self-dual four-negacirculant codes are known
(see e.g., \cite{HHKK}).

Let $C_1, C_2$ and $C_3$ be the ternary four-negacirculant codes of length $60$,
having generator matrices of form~\eqref{eq:4}, where
the pairs $(r_A,r_B)$
of the first rows $r_A$ and $r_B$ of the negacirculant matrices $A$ and $B$
are as follows
\begin{align*}
&((1, 1, 0, 2, 1, 1, 1, 2, 2, 2, 0, 1, 0, 0, 2), (2, 0, 0, 2, 1, 0, 0, 1, 2, 2, 0, 1, 0, 2, 2)),\\
&((1, 1, 2, 2, 1, 2, 2, 1, 1, 1, 2, 1, 2, 1, 2), (2, 2, 1, 2, 2, 0, 2, 2, 1, 2, 2, 2, 2, 1, 1)),\\
&((1, 0, 0, 1, 1, 2, 2, 0, 2, 1, 1, 0, 0, 0, 2), (1, 2, 0, 0, 2, 2, 1, 1, 0, 0, 0, 0, 2, 2, 0)),
\end{align*}
respectively.
We verified by \textsc{Magma} that  
$C_1 \cong QR_{60}$, $C_2 \cong P_{60}$ and $C_3 \cong NV_{60}$.
This was done by the \textsc{Magma} function \texttt{IsIsomorphic}.
Hence, we have the following proposition.

\begin{prop}
For each $C$ of the codes $QR_{60}$, $P_{60}$ and $NV_{60}$, 
there is a four-negacirculant code $D$ such that $C \cong D$.
\end{prop}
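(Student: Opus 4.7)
The plan is to proceed by direct exhibition and computational verification: the three explicit pairs $(r_A, r_B)$ listed above produce three four-negacirculant codes $C_1, C_2, C_3$ of length $60$, and it suffices to verify that each $C_i$ is equivalent to the claimed target code.

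First, I would construct each generator matrix in the form~\eqref{eq:4} from the given first rows $r_A$ and $r_B$, using the standard shift rule that defines a negacirculant matrix. Second, for each code $C_i$ I would check self-duality (i.e.\ that the generator matrix has rank $30$ and that $C_i \subseteq C_i^\perp$), which together with the $[60,30]$ dimension forces $C_i = C_i^\perp$. Third, I would compute the minimum weight of each $C_i$ and confirm it equals $18 = 3\lfloor 60/12\rfloor + 3$, certifying that $C_i$ is ternary extremal. Fourth, I would test the pairwise code equivalence $C_1 \cong QR_{60}$, $C_2 \cong P_{60}$, and $C_3 \cong NV_{60}$ using the \textsc{Magma} intrinsic \texttt{IsIsomorphic} on the associated codes (or equivalently by comparing invariants and finding a monomial transformation). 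Once each of these three equivalences is established, the proposition follows immediately.

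The conceptually hard part is not the verification itself, which is a routine \textsc{Magma} computation once the candidate pairs are in hand, but rather the search that produced the three pairs $(r_A, r_B)$. In practice one would enumerate or randomly sample pairs of vectors in $\FF_3^{15}$, retain those for which the resulting four-negacirculant code is self-dual and extremal (a small but nontrivial fraction), and then sort the surviving codes into equivalence classes to identify representatives isomorphic to $QR_{60}$, $P_{60}$, and $NV_{60}$. Since the proposition only asserts existence, we may simply record the three pairs found and leave the verification of the four points above to \textsc{Magma}; the final step of checking equivalence with the three target codes is what the statement asks us to confirm.
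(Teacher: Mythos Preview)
Your proposal is correct and takes essentially the same approach as the paper: exhibit the three explicit pairs $(r_A,r_B)$, build the corresponding four-negacirculant codes, and verify the equivalences $C_1\cong QR_{60}$, $C_2\cong P_{60}$, $C_3\cong NV_{60}$ with the \textsc{Magma} function \texttt{IsIsomorphic}. Your intermediate checks of self-duality and minimum weight are harmless but redundant, since they are implied by the equivalence with the known extremal self-dual targets.
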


It is worthwhile to determine whether there is a new ternary extremal  four-negacirculant
self-dual code of length $60$.

\begin{rem}
Two ternary extremal self-dual codes 
$D_{60,1}$ and $D_{60,2}$ of length $60$ were constructed in~\cite{HHKK}.
We verified by \textsc{Magma}  that $D_{60,1} \cong QR_{60}$ and $D_{60,2} \cong P_{60}$.
This was also done by the \textsc{Magma} function \texttt{IsIsomorphic}.
\end{rem}

\begin{rem}
Recently, it has been shown in~\cite{BCV} that there are exactly three inequivalent
ternary extremal self-dual codes of length $60$ having an automorphism of order $29$.
On the other hand, since each of $QR_{60}$, $P_{60}$ and $NV_{60}$ has an automorphism 
of order $29$,  the three codes found in~\cite{BCV} are  $QR_{60}$, $P_{60}$ and $NV_{60}$. 
\end{rem}

\bigskip
\noindent
\textbf{Acknowledgments.}
This work was supported by JSPS KAKENHI Grant Numbers 19H01802, 20K03719
and 21K03350.



\end{document}